\documentclass[reqno,a4paper,12pt]{amsart} 

\usepackage{amsmath,amscd,amsfonts,amssymb}
\usepackage{mathrsfs,dsfont}

\numberwithin{equation}{section}
\numberwithin{figure}{section}

\addtolength{\topmargin}{-0.5cm}
\addtolength{\textheight}{1cm}
\addtolength{\hoffset}{-1.5cm}
\addtolength{\textwidth}{3cm}

\parskip .06in

\newcommand\R{\mathbb{R}}
\newcommand\C{\mathbb{C}}

\newcommand\Z{\mathbb{Z}}

\newcommand\al{\alpha}
\newcommand\gam{\gamma}
\newcommand\Gam{\Gamma}
\newcommand\lam{\lambda}
\newcommand\Lam{\Lambda}
\newcommand\del{\delta}

\newcommand\om{\omega}

\newcommand\eps{\varepsilon}

\renewcommand\le{\leqslant}
\renewcommand\ge{\geqslant}
\renewcommand\leq{\leqslant}
\renewcommand\geq{\geqslant}
\newcommand\sbt{\subset}

\renewcommand\hat{\widehat}

\renewcommand\Im{\operatorname{Im}}

\newcommand{\ft}[1]{\widehat #1}

\newcommand{\supp}{\operatorname{supp}}

\renewcommand\S{\mathcal{S}}
\newcommand\DK{\underline{{D}}}

\theoremstyle{plain}
\newtheorem{thm}{Theorem}[section]
\newtheorem{lem}[thm]{Lemma}

\newtheorem*{claim*}{Claim}

\newcommand{\thmref}[1]{Theorem~\ref{#1}}

\newcommand{\lemref}[1]{Lemma~\ref{#1}}

\newcommand{\exampref}[1]{Example~\ref{#1}}

\theoremstyle{definition}

\newtheorem*{definition*}{Definition}
\newtheorem*{remarks*}{Remarks}
\newtheorem*{remark*}{Remark}

\newtheorem{example}[thm]{Example}

\newenvironment{enumerate-roman}
{\begin{enumerate}
\addtolength{\itemsep}{5pt}
}
{\end{enumerate}}

\newenvironment{enumerate-alph}
{\begin{enumerate}
\addtolength{\itemsep}{5pt}
}
{\end{enumerate}}

\newenvironment{enumerate-num}
{\begin{enumerate}
\addtolength{\itemsep}{5pt}
}
{\end{enumerate}}

\newenvironment{enumerate-text}
{\begin{enumerate}
\addtolength{\itemsep}{5pt}
}
{\end{enumerate}}

\begin{document}

\title[Crystalline temperate distributions]
{Crystalline temperate distributions with uniformly discrete support and spectrum}

\author{Nir Lev}
\address{Department of Mathematics, Bar-Ilan University, Ramat-Gan 5290002, Israel}
\email{levnir@math.biu.ac.il}

\author{Gilad Reti}
\address{Department of Mathematics, Bar-Ilan University, Ramat-Gan 5290002, Israel}
\email{gilad.reti@gmail.com}

\date{April 27, 2021}
\subjclass[2010]{30D15, 42A38, 52C23}
\keywords{Quasicrystals, Poisson's summation formula, temperate distributions}
\thanks{Research supported by ISF Grant No.\ 227/17 and ERC Starting Grant No.\ 713927.}

\begin{abstract}
We prove that a temperate distribution on $\R$ whose support and spectrum are uniformly discrete sets, can be obtained from Poisson's summation formula by a finite number of basic operations (shifts, modulations, differentiations, multiplication by polynomials, and taking linear combinations).
\end{abstract}

\maketitle


\section{Introduction} \label{secI1}

By a \emph{crystalline measure} on $\R$ (or $\R^d$) we
mean a 
pure point measure $\mu$ which is a
 temperate distribution and whose distributional
Fourier transform $\ft{\mu}$ is also a pure point measure,
\begin{equation}
  \label{eqI1.10}
\mu = \sum_{\lam\in \Lam} a(\lam) \del_\lam,
\quad
\ft{\mu} = \sum_{s \in S} b(s) \del_s,
\end{equation}
where the support $\Lam$ and the spectrum $S$
are locally finite sets \cite{Mey16}.
This notion may be considered as a 
mathematical model for quasicrystals, 
i.e.\ atomic arrangements having
a discrete diffraction pattern (see \cite{Mey95}, \cite{Lag00}).

A classical  example of a crystalline measure is
\begin{equation}
  \label{eqI1.20}
\mu = \sum_{\lam\in L} \del_\lam,
\end{equation}
where $L$ is a lattice. Indeed, by
 the Poisson summation formula, the Fourier
transform $\ft{\mu}$ is the sum of equal
masses on the dual lattice $L^*$.
By applying a finite number of
 shifts, modulations, and taking linear combinations, one
can construct more general examples 
of  crystalline measures $\mu$ whose
supports 
are contained in   finite unions of
 translates of the lattice $L$,
while the spectra contained
in   finite unions of
 translates of $L^*$.

However there exist also examples of crystalline measures
$\mu$ on $\R$, such that the 
support $\Lam$ is not contained in any finite
union of translates of a lattice. Constructions
of such examples, using different approaches, were
given in \cite{LO16}, \cite{Kol16}, 
\cite{Mey16}, \cite{Mey17}, \cite{RV19},
\cite{KS20},  \cite{Mey20}, \cite{OU20}.

On the other hand, it was proved in
\cite{LO13}, \cite{LO15} that if the
support $\Lam$ and the spectrum $S$ of a 
crystalline measure $\mu$ on $\R$ are 
\emph{uniformly discrete sets},
 then the measure can be obtained from  Poisson's 
 formula by a finite number of shifts, modulations, 
and taking linear combinations.

One can introduce a more general notion of
a \emph{crystalline temperate distribution},
which is by definition a temperate distribution $\alpha$ 
whose support
$\Lam$ is a locally finite set and such that
the spectrum $S$ (the support of the 
Fourier transform  $\ft{\alpha}$) is also 
a locally finite set.
See \cite{Pal17}, \cite{Fav18}, \cite{LR20}
where distributions of this type were studied.

Examples of crystalline temperate distributions
which are not crystalline measures
 may be constructed by starting from
 a crystalline measure $\mu$
and applying, in addition to the 
 operations mentioned above, also a finite number of 
\emph{differentiations} and
 \emph{multi\-plication by polynomials}.
The main result of  the present paper is 
that any temperate distribution 
on $\R$ whose support and spectrum are 
 uniformly discrete sets,
 can be obtained from Poisson's summation formula by 
these basic operations:

\begin{thm}
  \label{thmA1}
  Let $\alpha$ be a temperate distribution on $\R$
such that $\Lam = \supp(\alpha)$ and
$S = \supp(\ft{\alpha})$
are uniformly discrete sets. Then $\alpha$ can be
represented in  the form
  \begin{equation}
    \label{eqA1.1}
\alpha = \sum_{(\tau,\om,l,p)} c(\tau,\om,l,p)
\sum_{\lam \in L} \lam^l \, e^{2 \pi i \lam \om} \, \delta_{\lam+\tau}^{(p)}
  \end{equation}
where $L$ is a  lattice, 
$(\tau,\om,l,p)$ goes through a finite
set of quadruples such that
$\tau, \om$ are real numbers
and $l,p$ are nonnegative integers,
and $c(\tau,\om,l,p)$ are complex numbers.
\end{thm}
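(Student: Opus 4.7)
The plan is to reduce to the crystalline \emph{measure} theorem of \cite{LO13,LO15} by decomposing $\alpha$ canonically as a finite sum of derivatives of crystalline measures. First I would produce the decomposition $\alpha=\sum_{p=0}^{N}\mu_p^{(p)}$ with $\mu_p=\sum_{\lam\in\Lam}a_p(\lam)\,\delta_\lam$ a tempered measure on $\Lam$. Using the uniform discreteness of $\Lam$, pick a Schwartz bump $\psi$ supported in a small interval around the origin with $\psi(0)=1$ and $\psi^{(j)}(0)=0$ for all $j\ge 1$, and localize: $\alpha=\sum_{\lam\in\Lam}\alpha\,\psi(\cdot-\lam)$. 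Each summand is a finite-order distribution at the single point $\lam$, hence a combination of $\delta_\lam^{(p)}$'s; testing $\alpha$ against rescaled profiles $(x-\lam)^p\psi((x-\lam)/r)$ and letting $r\to 0$ in the tempered seminorm bound forces both a uniform order bound $N$ and polynomial growth of the coefficients $a_p(\lam)$. The decomposition is unique, since testing $\sum_p\mu_p^{(p)}=0$ against bumps localized at $\lam$ forces every $a_p(\lam)=0$. The identical argument applied to $\ft\alpha$ yields $\ft\alpha=\sum_{q=0}^{M}\nu_q^{(q)}$ with $\nu_q=\sum_{s\in S}b_q(s)\,\delta_s$.

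The key step is to show that each measure $\mu_p$ itself has uniformly discrete spectrum. Fourier-transforming the primal decomposition and equating it with the Fourier-side one yields
\[
\sum_{p=0}^{N}(2\pi i\xi)^p\,\ft{\mu_p}\;=\;\sum_{q=0}^{M}\nu_q^{(q)},
\]
whose right-hand side is supported on the uniformly discrete set $S$. I would argue, locally at each candidate $\xi_0\in\R$, by testing this identity against bumps analogous to those above and exploiting uniqueness of the canonical Fourier-side expansion together with the finite order bound $M$, that the individual $\ft{\mu_p}$ cannot place mass outside a uniformly discrete set that would then cancel after multiplication by $(2\pi i\xi)^p$. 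Once this is established, each $\mu_p$ is a crystalline measure of the type handled by \cite{LO15}, which produces a common refinement lattice $L$ and a representation $\mu_p=\sum_{(\tau,\om)}c_p(\tau,\om)\sum_{\lam\in L}e^{2\pi i\lam\om}\,\delta_{\lam+\tau}$. Substituting into $\alpha=\sum_p\mu_p^{(p)}$, differentiating termwise, and expanding $(\lam+\tau)^l$ by the binomial theorem absorbs the resulting polynomial-in-$x$ factors into the $\lam^l$ coefficients of \eqref{eqA1.1}.

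The principal obstacle is the middle paragraph: transferring uniform discreteness of $\supp\ft\alpha$ to each individual $\supp\ft{\mu_p}$, since smooth multiplication by $(2\pi i\xi)^p$ in principle allows mass of $\ft{\mu_p}$ outside $S$ to be cancelled between different $p$. Ruling out such cancellation will rely on the uniform order bounds $N,M$ obtained in the first paragraph together with the techniques for crystalline temperate distributions developed in \cite{LR20}, and constitutes the central technical content of the proof.
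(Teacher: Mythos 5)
Your first paragraph is essentially the paper's Lemma~2.1 (every tempered distribution with uniformly discrete support is a finite sum $\alpha=\sum_{p=0}^{N}\mu_p^{(p)}$ with polynomially bounded coefficients), and that part is fine. The difficulties begin in the reduction you then attempt.

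You correctly flag one gap yourself: you do not actually prove that each $\supp\ft{\mu_p}$ is uniformly discrete (or even locally finite). But there is a second, unacknowledged gap that blocks the plan even if the first were resolved. To invoke the crystalline measure theorem of \cite{LO13,LO15} you need each $\mu_p$ to be a measure \emph{whose Fourier transform is also a measure} with uniformly discrete support; the hypothesis of $\alpha$ gives you no control over the \emph{order} of $\ft{\mu_p}$ as a distribution. Concretely, take
\begin{equation*}
\alpha \;=\; \sum_{n\in\Z}\delta_n' \;-\; 2\pi i\sum_{n\in\Z} n\,\delta_n,
\end{equation*}
so that $\mu_1=\sum_n\delta_n$ and $\mu_0=-2\pi i\sum_n n\,\delta_n$. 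By Poisson's formula
$\ft\alpha=\sum_{m\in\Z}\bigl(2\pi i m\,\delta_m+\delta_m'\bigr)$, so $\alpha$ satisfies all the hypotheses of the theorem with $\Lam=S=\Z$. Yet $\ft{\mu_0}=\sum_{m\in\Z}\delta_m'$ is not a measure, so $\mu_0$ is \emph{not} a crystalline measure in the sense of \cite{LO15}, and your reduction stalls. (Its spectrum $\supp\ft{\mu_0}=\Z$ is uniformly discrete, so your acknowledged gap is not where the example breaks you; it is the implicit assumption that $\ft{\mu_p}$ is a measure.) In fact, a $\mu_p$ that is a measure with uniformly discrete support and uniformly discrete spectrum is exactly a ``crystalline temperate distribution'' of the kind the theorem is about, so trying to dispatch $\mu_p$ by an existing theorem risks being circular.

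The paper avoids this by not decomposing $\alpha$ into derivatives of measures at the outset. Theorem~\ref{thmB1} works with $\alpha$ as a single distribution of bounded order: from $\alpha$ it constructs, for each difference $h\in\Lam-\Lam$, an auxiliary distribution $\gam_{h,l}$ of order $\le k$ supported on $\Lam_h=\Lam\cap(\Lam-h)$ and having a spectral gap, applies the gap-versus-density inequality (Theorem~\ref{thm:gapdens}, which is stated for finite-order distributions, not measures), and feeds the resulting uniform lower bound on $\DK(\Lam_h)$ into the Meyer-set characterization (Theorem~\ref{thmLO1}). Only \emph{after} one knows $\Lam\subset L+\{\tau_1,\dots,\tau_N\}$ does a decomposition into measures $\mu_{j,p}$ on cosets of $\Z$ appear (Theorem~\ref{thmB2}), and there the conclusion that $\ft{\mu_{j,p}}$ is discrete comes from $\Z$-periodicity plus local finiteness and a solvable linear system — not from an appeal to the crystalline measure theorem. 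So the route is genuinely different, and the difference matters: your decomposition is made too early, before the lattice structure of $\Lam$ is available, and the resulting pieces need not satisfy the hypotheses of any theorem you can cite without circularity.
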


Conversely, if $\alpha$  is a 
 distribution of the form
\eqref{eqA1.1} then its support 
$\Lam$ and spectrum $S$ are uniformly
discrete sets. In fact, $\Lam$
is contained in  a finite union of
 translates of the lattice $L$, while 
$S$ is contained in a finite union of  translates 
of  the dual lattice  $L^*$.

The proof of 
\thmref{thmA1} is based on the approach 
developed  in \cite{LO13}, \cite{LO15}
which is generalized from the context
of measures to  temperate distributions.


\section{Preliminaries}

In this section we briefly recall some preliminary background
in the theory of Schwartz distributions
(see \cite{Rud91} for more details).

The \emph{Schwartz space}  $\S(\R)$ consists of all infinitely smooth
functions $\varphi$ on $\R$ such that for each $n,k \geq 0$, the norm
\begin{equation}
\|\varphi\|_{n,k} := \sup_{x \in \R} (1+|x|)^n \sum_{j =0}^{k} |\varphi^{(j)}(x)|
\end{equation}
is finite. 
A \emph{temperate distribution} on $\R$ is a 
linear functional  on the Schwartz space
which is continuous with respect to the 
 topology generated by 
this family of norms.

We use  $\alpha(\varphi)$ to denote  the action of
a  temperate distribution $\alpha$ on a
Schwartz function $\varphi$.
For each temperate distribution $\alpha$ there exist $n$ and $k$ such that
\begin{equation}
\label{eq:schnorm}
|\alpha(\varphi)| \leq C\|\varphi\|_{n,k},  \quad \varphi \in \S(\R),
\end{equation}
where $C = C(\alpha,n,k)$ is a constant  which  does not depend on $\varphi$.

If $\varphi$ is a
Schwartz function then its Fourier transform
is defined by
\begin{equation}
\hat{\varphi}(t) = \int_{\R} \varphi(x) \, e^{-2\pi i t x} \, dx.
\end{equation}
If $\alpha$ is a temperate distribution then
its Fourier transform is defined by 
$\ft{\alpha}(\varphi) = \alpha(\ft{\varphi})$.

We denote by $\supp(\alpha)$ the closed support of a
temperate distribution $\alpha$.

If $\alpha$ is
a temperate distribution  and if
$\varphi$  is a 
Schwartz function, then 
the product $\alpha \cdot \varphi$ is 
a temperate distribution   defined
by $(\alpha \cdot \varphi)(\psi) =
\alpha(\varphi \cdot \psi)$,
$\psi \in \S(\R)$.
If $\varphi$ does not vanish at any
point of $\supp(\alpha)$ then we have
$\supp(\alpha \cdot \varphi)=
\supp(\alpha)$.

The convolution
$\alpha \ast \varphi$ 
of a temperate distribution  
 $\alpha$ and 
a Schwartz function $\varphi$ is 
 an infinitely smooth function
which is also
a temperate distribution, and
whose Fourier transform is $\ft{\al} \cdot \ft{\varphi}$.
If $\varphi$ has compact support then
$\supp(\al \ast \varphi)$ is contained in the
 Minkowski sum 
 $\supp(\al) + \supp(\varphi)$.

\begin{lem}
  \label{lem:alphcoeffpolygrow}
Let  $\alpha$ be
a temperate distribution whose
support $\Lam$ is
a uniformly discrete set. Then 
$\alpha$ has the form
\begin{equation}
\label{eq:tudphiprod}
\alpha = \sum_{p=0}^{k} \sum_{\lam \in \Lam}
a_p(\lam) \del_{\lam}^{(p)}
\end{equation}
where the coefficients $a_p(\lam)$ satisfy the condition
\begin{equation}
\label{eq:tudsumcoefffin}
 \sum_{p=0}^{k} 
|a_p(\lam)|  \leq C(1 + |\lam|)^n,
\quad \lam \in \Lam,
\end{equation}
for certain constants $n$ and $C$.
\end{lem}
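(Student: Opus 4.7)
My plan is to localize $\alpha$ near each support point, invoke the structure theorem for distributions with one-point support, and then use the continuity estimate \eqref{eq:schnorm} both to bound the orders uniformly and to control the size of the coefficients. Since $\Lam$ is uniformly discrete, I fix $r > 0$ so that the intervals $(\lam - r, \lam + r)$, $\lam \in \Lam$, are pairwise disjoint, and I choose a $C^\infty$ bump function $\eta$ supported in $(-r, r)$ with $\eta \equiv 1$ on a neighborhood of $0$. Setting $\eta_\lam(x) := \eta(x - \lam)$, I first observe that $\alpha = \sum_{\lam \in \Lam} \alpha \cdot \eta_\lam$, since the difference $\alpha \cdot \bigl(1 - \sum_\lam \eta_\lam\bigr)$ has support contained in $\Lam$ while $\sum_\lam \eta_\lam \equiv 1$ on a neighborhood of each point of $\Lam$. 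Each summand $\alpha \cdot \eta_\lam$ has one-point support $\{\lam\}$, hence by the standard structure theorem equals $\sum_{p=0}^{k_\lam} a_p(\lam)\, \del_\lam^{(p)}$ for some integer $k_\lam$ and complex numbers $a_p(\lam)$.

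The main step is to upgrade this to a uniform order bound $k_\lam \le k$, where $n$ and $k$ are the integers from \eqref{eq:schnorm}. I argue by contradiction: assuming $p_0 := k_\lam > k$ with $a_{p_0}(\lam) \neq 0$, I test $\alpha$ against
\[
\psi_\eps(x) \;=\; \frac{(x - \lam)^{p_0}}{p_0!}\; \eta\!\left(\frac{x - \lam}{\eps}\right), \qquad \eps \in (0, 1].
\]
A Leibniz computation at $x = \lam$ shows $\psi_\eps^{(j)}(\lam) = \del_{j, p_0}$ for all $j$, so that $\alpha(\psi_\eps) = (\alpha \cdot \eta_\lam)(\psi_\eps) = (-1)^{p_0} a_{p_0}(\lam)$ is nonzero and independent of $\eps$. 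On the other hand, using $|x - \lam| \le r\eps$ on $\supp(\psi_\eps)$, the same Leibniz expansion gives $\|\psi_\eps\|_{n, k} = O(\eps^{p_0 - k})$ as $\eps \to 0$, contradicting \eqref{eq:schnorm}. This scaling estimate, ensuring every lower-order derivative of $\psi_\eps$ picks up at least one factor of $\eps$, is the step I expect to require the most care.

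Once $k_\lam \le k$ is established, polynomial growth of the coefficients follows by testing against the fixed-scale bumps $\psi_{\lam, p}(x) := \frac{(x - \lam)^p}{p!}\, \eta(x - \lam)$ for $0 \le p \le k$. Since $\psi_{\lam, p}^{(j)}(\lam) = \del_{p, j}$ and $\supp(\psi_{\lam, p}) \sbt (\lam - r, \lam + r)$ meets no other $\eta_{\lam'}$, I get $\alpha(\psi_{\lam, p}) = (-1)^p a_p(\lam)$. On this support $(1 + |x|)^n \le C(1 + |\lam|)^n$, while the derivatives of $\psi_{\lam, p}$ up to order $k$ are bounded by constants depending only on $p$, $k$, and $\eta$. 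Hence $\|\psi_{\lam, p}\|_{n, k} \le C_p (1 + |\lam|)^n$, and \eqref{eq:schnorm} yields $|a_p(\lam)| \le C_p (1 + |\lam|)^n$. Summing over $p = 0, 1, \dots, k$ gives \eqref{eq:tudsumcoefffin}.
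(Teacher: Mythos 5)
The paper itself does not prove this lemma; it refers to \cite[Proposition 2]{Pal17} and \cite[Proposition 3.1]{Fav18}. Your proof is correct and follows the standard approach one expects those references to take: localize $\alpha$ via disjointly supported bumps $\eta_\lam$ (possible precisely because $\Lam$ is uniformly discrete), apply the structure theorem for distributions with one-point support, then exploit the continuity estimate \eqref{eq:schnorm} twice --- first with the $\eps$-rescaled test functions $\psi_\eps$ to force the uniform order bound $k_\lam \le k$, and then with the fixed-scale bumps $\psi_{\lam,p}$ to extract the polynomial growth $|a_p(\lam)| \le C_p(1+|\lam|)^n$. The scaling computation $\|\psi_\eps\|_{n,k} = O(\eps^{p_0-k})$ is handled correctly: each Leibniz term contributes $(x-\lam)^{p_0-i}\,\eps^{-(j-i)}\eta^{(j-i)}((x-\lam)/\eps)$, and on $\supp(\psi_\eps)$ the factor $|x-\lam|^{p_0-i}\le (r\eps)^{p_0-i}$ yields $\eps^{p_0-j}$ for each term, so the supremum over $j\le k$ is $O(\eps^{p_0-k})$. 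One point you gloss over but which is fine: the identity $\alpha = \sum_\lam \alpha\cdot\eta_\lam$ is best read as $\alpha\cdot g = 0$ for $g:=1-\sum_\lam\eta_\lam$, which holds because $g$ vanishes on a neighbourhood of $\Lam=\supp(\alpha)$ and is a bounded smooth multiplier; the absolute convergence of $\sum_\lam \alpha\cdot\eta_\lam$ in $\S'(\R)$ then follows a posteriori from the coefficient bounds you establish.
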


For a proof see
e.g.\ \cite[Proposition 2]{Pal17}
or \cite[Proposition 3.1]{Fav18}.


\section{Spectral gap and density}
\label{sec:gaps}

We say that a temperate distribution $\gam$ 
has a \emph{spectral gap} of size $a>0$, if 
its Fourier transform $\ft{\gam}$ vanishes on an 
 interval of length $a$.
There is a well-known principle stating that if
a set $\Gam \subset \R$ supports a nonzero 
measure, or a distribution,
with a spectral gap, then $\Gam$ cannot be ``too sparse''.
Several concrete versions
of this general principle can be found in
 \cite[Proposition 7]{KM58}, 
 \cite{MP10}, \cite{Pol12}, \cite[Section 4]{LO15}.

Let $\Gam \subset\mathbb R$ be a locally finite set
(that is, a set with no finite accumulation points) 
and consider a distribution $\gam$ of the form
\begin{equation}
\label{eq:tlamabsconv}
\gam = \sum_{p=0}^{k} \sum_{\lam \in \Gam}
c_p(\lam) \del_{\lam}^{(p)},
\end{equation}
where the coefficients $c_p(\lam)$ are
assumed to satisfy the condition
\begin{equation}
\label{eq:tlocfinsumcoefffin}
 \sum_{p=0}^{k} \sum_{\lam \in \Gam}
|c_p(\lam)| < + \infty.
\end{equation}
The condition \eqref{eq:tlocfinsumcoefffin} implies
in particular that the sum in \eqref{eq:tlamabsconv}  converges
in the space of temperate distributions.

We define the density $\DK(\Gam)$ of 
the  set $\Gam$ to be 
\begin{equation}
\label{eq:defdk}
\DK(\Gam) := \liminf_{R\to+\infty}
\frac1{2R}\int_{1}^{R} \frac{n_\Gam(r)}{r} dr,
\end{equation} 
where we denote $n_\Gam(r) := \# (\Gam \cap [-r,r])$.
We have the following result:

\begin{thm}
\label{thm:gapdens}
Let $\Gam \subset \R$ be a locally finite set,
and let $\gam$ be
a nonzero distribution of the form
\eqref{eq:tlamabsconv} and such that
condition \eqref{eq:tlocfinsumcoefffin} is satisfied.
If the Fourier transform $\ft{\gam}$
 vanishes on an interval of length $a$, then we
must have
\begin{equation}
\label{eq:gapdens}
\DK(\Gam)\ge a/(k+1).
\end{equation}
\end{thm}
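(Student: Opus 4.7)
I would adapt the approach of \cite{LO15} from the measure case ($k=0$) to distributions of order at most $k$; the factor $k+1$ in the bound reflects the multiplicity created by the derivatives $\delta_\lam^{(p)}$.

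After modulating $\gam$ by a suitable exponential $e^{-2\pi i c x}$, we may arrange that the spectral gap is the symmetric interval $(-a/2, a/2)$; this modulation preserves $\Gam$ and, by the Leibniz rule, keeps $\gam$ in the form \eqref{eq:tlamabsconv} with the $\ell^1$ summability \eqref{eq:tlocfinsumcoefffin} intact. For every Schwartz $\phi$ whose Fourier transform lies in $C_c^\infty(-a/2, a/2)$, the convolution $\gam*\phi$ has Fourier transform $\ft\gam \cdot \ft\phi \equiv 0$, so
\[
\sum_{p=0}^{k}\sum_{\lam\in\Gam} c_p(\lam)\,\phi^{(p)}(x - \lam) = 0, \qquad x\in\R. \qquad (*)
\]

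I argue by contradiction: suppose $\DK(\Gam) < a/(k+1)$ while $c_{p_0}(\lam_0)\neq 0$ for some $\lam_0 \in \Gam$ and $p_0 \leq k$. I would construct a Schwartz function $\phi$ with $\ft\phi$ compactly supported in $(-a/2, a/2)$, vanishing to order $\geq k+1$ at every point of the shifted set $Z := \lam_0 - (\Gam\setminus\{\lam_0\})$ and with $\phi(0)\neq 0$. Substituting $\phi$ into $(*)$ at $x = \lam_0$ kills all terms with $\lam\neq \lam_0$, leaving $\sum_{p=0}^{k} c_p(\lam_0)\,\phi^{(p)}(0) = 0$. Running through the perturbed test functions $x^j\phi(x)$ for $j = 0,1,\ldots,k$ (each still Schwartz, with FT still inside the same open interval) yields a triangular system in $c_0(\lam_0),\ldots,c_k(\lam_0)$ whose diagonal entries are $j!\,\phi(0)\neq 0$; this forces every $c_p(\lam_0) = 0$, contradicting the choice of $\lam_0$.

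The heart of the proof is the construction of $\phi$. Form the genus-$1$ Weierstrass canonical product $\Pi(z) = \prod_{\mu\in Z} E_1(z/\mu)^{k+1}$. Jensen's formula applied to $\Pi$, together with the hypothesis $\DK(Z) = \DK(\Gam) < a/(k+1)$, gives that $\Pi$ has exponential type strictly less than $\pi a$; Bernstein-type estimates then ensure $\Pi$ and its derivatives are bounded on $\R$. Multiplying by a Schwartz mollifier $m$ with $\ft m$ compactly supported in some small $(-\delta,\delta)$ and $m$ non-vanishing on $Z\cup\{0\}$ produces $\phi := \Pi\cdot m$, which is Schwartz, has $\ft\phi$ supported inside $(-a/2, a/2)$, and carries zeros of order $\geq k+1$ on $Z$ with $\phi(0)\neq 0$. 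The delicate technical step --- and the main anticipated obstacle --- is simultaneously controlling convergence of the Weierstrass product, the sharp exponential-type bound, and real-axis boundedness: these are classical issues in entire-function theory, treated in the references \cite{KM58,Pol12,LO15} that underlie the approach.
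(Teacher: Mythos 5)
Your plan takes a fundamentally different route from the paper's, and it contains a genuine gap at its core step. The paper works on the distribution side: it forms the Fourier--Carleman transform of $\gamma$, identifies it with the Cauchy transform (a meromorphic function whose poles lie in $\Gam$ with multiplicity at most $k+1$), uses the spectral gap to show that this function decays like $e^{-\pi a|y|}$ off the real axis, and then applies Jensen's formula to that meromorphic function. There the logic runs the right way: the decay upper-bounds the circular average of $\log|f|$, the zeros contribute nonnegatively, so the poles --- hence $\Gam$ --- must be dense enough. Your proof instead tries to build a bandlimited test function $\phi$ vanishing to high order on $Z=\lam_0-(\Gam\setminus\{\lam_0\})$, and this is where it breaks.

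The specific problem is the assertion that $\DK(Z)<a/(k+1)$ lets Jensen's formula deliver an upper bound on the exponential type of the canonical product $\Pi$. Jensen's formula runs the wrong way for this: for an entire function it expresses the circular \emph{average} of $\log|\Pi|$ in terms of the zero-counting function, and hence gives a \emph{lower} bound on $\max_{|z|=R}\log|\Pi|$, never an upper one. Upper bounds on the type of a genus-one canonical product require control of the counting function $n_Z(r)$ from above (a $\limsup$ or averaged upper density), whereas $\DK$ is a $\liminf$. A locally finite set can have $\DK(\Gam)$ as small as you like along a lacunary sequence of radii while being arbitrarily dense elsewhere; in that case $\Pi$ need not have small type and may not even be of finite exponential type, and the Bernstein boundedness you invoke on the real axis also fails in general for canonical products without strong structural hypotheses on the zero set. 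So the key object $\phi$ with $\supp\ft\phi\sbt(-a/2,a/2)$ is not available under your hypotheses, and the contradiction never gets off the ground. The rest of your scheme (the triangular system in the $c_p(\lam_0)$, the modulation normalizing the gap to $(-a/2,a/2)$) is fine, but it is downstream of the unavailable construction. To repair the argument you would essentially be forced back to the duality/complex-analysis route the paper uses, where the function being analyzed is tied to $\gam$ itself (so its exponential decay comes for free from the gap) rather than being an auxiliary multiplier you must manufacture.
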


The result actually holds under more general
assumptions, where instead of  conditions
\eqref{eq:tlamabsconv} and 
\eqref{eq:tlocfinsumcoefffin} one
merely assumes that $\gam$ is a
nonzero distribution 
with $\supp(\gam) \sbt \Gam$
and satisfying the condition
$|\gam(\varphi)| \leq C\|\varphi\|_{n,k}$, 
$\varphi \in \S(\R)$, where
 $n,k$ and $C$ do not depend on $\varphi$.
However we will not use this more
general version in the paper and we
do not give its proof.
The proof of \thmref{thm:gapdens} given below 
is based on 
a classical approach which involves an application
of Jensen's formula to the Cauchy transform of 
 the distribution $\gam$, see \cite[p.\ 73]{KM58}.

We note that 
the estimate  \eqref{eq:gapdens}  is sharp,
see \exampref{rem:gapdenssharp} below.

\begin{proof}[Proof of \thmref{thm:gapdens}]
Consider the Fourier transform $\ft{\gam}$ of the distribution $\gam$, 
\begin{equation}
\label{eq:fttexprsum}
\ft{\gam}(t)  = \sum_{p=0}^{k} (2 \pi i t)^p \sum_{\lam \in \Gam}
c_p(\lam) e^{-2 \pi i \lam t}.
\end{equation}
Then $\ft{\gam}$ is  a continuous function satisfying the 
estimate 
\begin{equation}
\label{eq:tftestpoly}
|\ft{\gam}(t)| \leq K (1+|t|)^k,
\end{equation}
where $K$ is a certain constant which does not depend on $t$.
Define the function
\begin{equation}
\label{eq:qtftypos}
f(z) := - \int_{0}^{\infty} \ft{\gam}(t)  \, e^{2 \pi i z t} \,dt,
\quad \Im(z)>0,
\end{equation}
and
\begin{equation}
\label{eq:qtftyneg}
f(z) := \int_{-\infty}^{0} \ft{\gam}(t)  \, e^{2 \pi i z t} \,dt,
\quad \Im(z)<0.
\end{equation}
The integrals in \eqref{eq:qtftypos} and \eqref{eq:qtftyneg}
converge absolutely, due to the 
 estimate \eqref{eq:tftestpoly}.

The function $f$ is called the
\emph{Fourier-Carleman transform} of the distribution $\gam$.
It follows from the 
uniqueness property of the Fourier transform 
that the function 
$\ft{\gam}$, and hence also the distribution $\gam$, 
are  uniquely determined by $f$.
In particular, since   $\gam$ is assumed
to be nonzero, it follows that
$f$ does not vanish identically.

If we substitute \eqref{eq:fttexprsum} into 
\eqref{eq:qtftypos} and \eqref{eq:qtftyneg}
and exchange the order of summation and integration
(which is justified  using \eqref{eq:tlocfinsumcoefffin}
and the dominated convergence theorem) then we obtain
\begin{equation}
\label{eq:fctrnsdef}
f(z) =  \frac1{2\pi i} 
\sum_{p=0}^{k} \sum_{\lam \in \Gam} c_p(\lam)
 \frac{p!  (-1)^p}{(z-\lam)^{p+1}}
\end{equation}
for every $z \in \C \setminus \R$.
Using again the assumption
\eqref{eq:tlocfinsumcoefffin} this
implies that $f$ can be extended to 
a meromorphic function in $\C$ whose
poles  are contained in $\Gam$, and
such that the multiplicity of each 
pole is at most $k+1$.

The right hand side
of \eqref{eq:fctrnsdef} is called
the \emph{Cauchy transform} of  the distribution $\gam$.
 The equality in \eqref{eq:fctrnsdef}
states a well-known relation between the Cauchy transform
and the Fourier-Carleman transform, see e.g.\ \cite[Section 3]{Ben84}.

We now use the assumption that
$\ft{\gam}$
 vanishes on an interval of length $a$.
By applying a translation we may assume, with
no loss of generality, that $\ft{\gam}$ vanishes
on the interval $(-a/2, a/2)$.
Using this together with
 \eqref{eq:tftestpoly}, \eqref{eq:qtftypos} 
and \eqref{eq:qtftyneg} we obtain
\begin{equation}
\label{eq:ctrnsest}
|f(x+iy)| \leq K \int_{a/2}^{\infty} (1+t^{k})  e^{-2 \pi |y| t} dt
=  K |y|^{-k-1} p_k(|y|) e^{- \pi a |y|},
\end{equation}
where $p_k$ is a polynomial of degree $k$ whose
coefficients depend on $k$ and $a$ but do not depend on $x$ and $y$.
This implies the estimate
\begin{equation}
\label{eq:qtdecayest}
|f(x+iy)| \leq C (|y|^{-k-1} + |y|^{-1}) e^{- \pi a |y|},
\quad x \in \R, \quad y \neq 0,
\end{equation}
where $C$ is a constant not depending on $x$ and $y$.

Finally we apply
Jensen's formula (see e.g.\ \cite[Chapter XII, Section 1]{Lan99}
or \cite[Sections 2.4 and 2.5]{Lev96}) to the function $f$. It
 yields that for $R \geq 1$ we have
\begin{equation}
\label{eq:jensenequal}
 \int_{1}^{R} \frac{n_f(r,0) - n_f(r,\infty)}{r} dr
= \int_{0}^{2 \pi} \log |f(R e^{i \theta})| \frac{d \theta}{2 \pi}
+  c_f,
\end{equation}
where $n_f(r,0)$ is the number of zeros 
(counted with multiplicities) of $f$ in the
disk $\{z : |z| \leq r\}$, 
$n_f(r,\infty)$ is the number of poles in the
same disk (again counted with multiplicities),
 and $c_f$ is a constant
which depends on $f$ but does
not depend on $R$.

The estimate 
\eqref{eq:qtdecayest}   implies that
\begin{equation}
\label{eq:festjens}
 \int_{0}^{2 \pi} \log |f(R e^{i \theta})|  \frac{d \theta}{2 \pi}
\leq  - 2 a R +  o(R), \quad R \to +\infty.
\end{equation}
We also have
\begin{equation}
\label{eq:polesleqkpone}
n_f(r, \infty) \leq (k+1) n_\Gam(r),
\end{equation}
since the poles of $f$ are contained in $\Gam$ and
the multiplicity
of each pole is at most $k+1$. It then follows from 
\eqref{eq:jensenequal},
\eqref{eq:festjens} and
\eqref{eq:polesleqkpone}
that
\begin{equation}
\label{eq:jenscombine}
 (k+1) \int_{1}^{R} \frac{n_\Gam(r)}{r} dr
\geq 2aR - o(R), \quad R \to +\infty.
\end{equation}
If we now divide both sides of  \eqref{eq:jenscombine}
by $2(k+1)R$ and take the $\liminf$ as $R \to +\infty$,
we arrive at \eqref{eq:gapdens}.
This concludes the proof of \thmref{thm:gapdens}.
\end{proof}

\begin{example}
  \label{rem:gapdenssharp}
The following example shows that the estimate 
 \eqref{eq:gapdens}  is sharp.
Let $\Gam := \Z$,
then $\DK(\Gam)=1$.
Given $\eps>0$ we 
construct a nonzero distribution $\gam$ 
satisfying \eqref{eq:tlamabsconv}
and \eqref{eq:tlocfinsumcoefffin}, and such
that the Fourier transform $\ft{\gam}$
vanishes on an
interval of length $a = k+1-\eps$. 
Let
 $\alpha := \sum_{p=0}^{k} a_p \sum_{\lam \in \Z}  \del_\lam^{(p)}$,
where we choose the coefficients $\{a_p\}$ 
 so that $P(t) := \sum_{p=0}^{k} a_p (2 \pi i t)^p$
is a nonzero polynomial 
vanishing on  the set $\{1,2,\dots,k\}$. Then  $\alpha$ is 
a nonzero temperate distribution, $\supp(\alpha) \sbt \Gam$,
and $\ft{\alpha}$
vanishes on the open interval $(0, k+1)$.
We then let $\gam := \alpha \cdot \varphi$,
where
$\varphi$ is a Schwartz function such that 
$|\varphi|>0$ and
$\supp(\ft\varphi) \sbt (-\eps,0)$.
Then the distribution $\gam$ 
is nonzero,  it satisfies the conditions
 \eqref{eq:tlamabsconv} and
\eqref{eq:tlocfinsumcoefffin},  and 
$\ft{\gam}$ vanishes on
the interval $[0, k+1-\eps]$.
\end{example}


\section{Arithmetic structure of the support}

In this section our goal is
to prove the following result:

\begin{thm}
  \label{thmB1}
  Let $\alpha$ be a temperate distribution on $\R$
such that $\Lam = \supp(\alpha)$ and
$S = \supp(\ft{\alpha})$
are both uniformly discrete sets. Then 
$\Lam$ is contained in a finite union
of translates of some lattice.
\end{thm}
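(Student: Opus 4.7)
The strategy is to use the spectral gap estimate \thmref{thm:gapdens} together with the fact that \emph{both} $\Lam$ and $S$ are uniformly discrete to force arithmetic rigidity on $\Lam$, generalizing the approach of \cite{LO13, LO15} from crystalline measures to temperate distributions.

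First, I would put $\alpha$ into a concrete form by applying \lemref{lem:alphcoeffpolygrow} both to $\alpha$ and (by symmetry) to $\hat\alpha$:
\[
\alpha \;=\; \sum_{p=0}^{k}\sum_{\lam\in\Lam} a_p(\lam)\,\del_\lam^{(p)},
\qquad
\hat\alpha \;=\; \sum_{q=0}^{m}\sum_{s\in S} b_q(s)\,\del_s^{(q)},
\]
with coefficients of polynomial growth. In particular $\alpha$ has distributional order at most $k$, so the hypothesis of \thmref{thm:gapdens} is available for any subdistribution cut from $\alpha$.

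The main construction is a family of auxiliary distributions with spectral gaps. For a shift $\tau\in\R$ and a frequency $s_0$ set
\[
\beta_{\tau,s_0} \;:=\; \alpha \;-\; e^{2\pi i s_0 \tau}\,T_\tau \alpha ,
\]
where $T_\tau$ is translation by $\tau$. Its Fourier transform equals $\bigl(1 - e^{-2\pi i \tau(t-s_0)}\bigr)\hat\alpha(t)$, still supported on $S$ but now vanishing (to first order) on the arithmetic progression $s_0 + \tau^{-1}\Z$. Composing $m+1$ such operators one cancels $\hat\alpha$ completely (including the derivatives $\del_s^{(q)}$ for $0\le q\le m$) at each common zero of the exponential factors. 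Using that $S$ is uniformly discrete, one can choose finitely many pairs $(\tau_j,s_j)$ with all $\tau_j$ drawn from $\Lam-\Lam$ so that the combined zero set covers $S\cap I$ for a prescribed interval $I=(A, A+a)$ of large length $a$. Since $\hat\alpha$ is supported on $S$, the resulting distribution $\gam$ then has $\hat\gam$ vanishing on the whole interval $I$; its own support lies in a finite (bounded independently of $a$) union of translates of $\Lam$. Applying \thmref{thm:gapdens} to $\gam$ gives $\DK(\supp\gam)\ge a/(k+1)$, while uniform discreteness of $\Lam$ bounds $\DK(\supp\gam)$ from above by a constant depending only on the separation of $\Lam$ and the number of translates used. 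Hence if the construction yields a \emph{nonzero} $\gam$, the length $a$ is bounded; equivalently, beyond a certain size $a_0$ any such construction must produce $\gam=0$. This vanishing is a rigid algebraic identity which, unpacked, forces the exponents $\tau_j$ occurring in the shift-subtract operators to satisfy commensurability relations, i.e.\ to lie in a finitely generated subgroup of $\R$. Since $\tau_j$ range over $\Lam-\Lam$, the difference set $\Lam-\Lam$ is contained in such a subgroup, and a standard argument (using uniform discreteness of $\Lam$ once more) then places $\Lam$ in a finite union of translates of a lattice $L$.

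\textbf{Main obstacle.} The most delicate point is the construction of $\gam$ as a distribution that (a)~has the desired spectral gap, (b)~has support in a \emph{bounded} number of translates of $\Lam$ regardless of $a$, and (c)~is genuinely nonzero. In the measure case of \cite{LO15} only first-order cancellations at each $s\in S\cap I$ are required; here, the derivatives $\del_s^{(q)}$ appearing in $\hat\alpha$ force higher-order vanishing, and the exponential products must be coordinated so that the requisite multiplicities are achieved without blowing up the number of shifts. Verifying nontriviality of $\gam$ (via a dimension count for the finite-dimensional space of admissible coefficient sequences on $S\cap I$, compared against the number of linear conditions imposed) is the core technical step that adapts the \cite{LO13, LO15} argument to the distributional setting.
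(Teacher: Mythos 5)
Your proposal correctly identifies \thmref{thm:gapdens} as the central analytic tool and correctly senses that the two uniform-discreteness hypotheses should be converted into density information. However, the construction you propose and the logical endgame both have gaps that make the argument unsound.

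\textbf{The auxiliary distribution is not under control.} You form $\gam$ by composing shift-subtract operators $\al\mapsto\al - e^{2\pi i s_0\tau}T_\tau\al$ with $\tau\in\Lam-\Lam$, and assert that the resulting $\gam$ is supported in a number of translates of $\Lam$ that is bounded independently of $a$. This is false: $T_\tau\al$ is supported on $\Lam+\tau$, and $\tau\in\Lam-\Lam$ does \emph{not} imply $\Lam+\tau\subset\Lam$. After $n$ compositions the support lies in as many as $2^n$ translates of $\Lam$, and to cover $S\cap(A,A+a)$ with the zero sets of the exponential factors you must take $n\gtrsim a$ (each progression $s_0+\tau^{-1}\Z$ meets $S\cap I$ in $O(1)$ points, because $S$ is uniformly discrete). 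So the upper bound on $\DK(\supp\gam)$ grows with $a$ and the contradiction with the lower bound $a/(k+1)$ from \thmref{thm:gapdens} never materializes. A separate technical problem is that \thmref{thm:gapdens} requires the absolutely summable condition \eqref{eq:tlocfinsumcoefffin}, which $\al$ itself does not satisfy; some truncation by a Schwartz weight is needed before the theorem applies.

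\textbf{The endgame is unsubstantiated.} Even conceding the construction, your plan to extract structure from the forced vanishing $\gam=0$ rests on two claims that are not justified: that a vanishing identity ``unpacked'' forces the $\tau_j$ to lie in a finitely generated subgroup of $\R$, and that a ``standard argument'' then yields the lattice containment. The second claim is simply false: a finitely generated subgroup of $\R$ (e.g.\ $\Z+\eps\Z$ for irrational $\eps$) can be dense, and a uniformly discrete $\Lam$ with $\Lam-\Lam$ inside such a group need not be contained in finitely many translates of any lattice. The genuine content needed here is Meyer's characterization of Meyer sets, packaged in the paper as \thmref{thmLO1}, which your proposal does not invoke.

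\textbf{Contrast with the paper's route.} The paper first observes (via \lemref{lem:suppreldens} and \thmref{thm:gapdens}) that $\Lam$ is relatively dense, hence Delone. It then shows the hypothesis of \thmref{thmLO1} by producing, for each $h\in\Lam-\Lam$, a nonzero distribution $\gam_h$ supported on the intersection $\Lam_h=\Lam\cap(\Lam-h)$ with absolutely summable coefficients and order $\le k$, whose Fourier transform vanishes on an interval of length nearly $\del(S)$. The key device is the autocorrelation $g(x,u)=f(x)\overline{f(x-u)}$ of $f=\ft{(\al\cdot\varphi)}$: for each $u$ outside an $r$-neighborhood of $S-S$, the identity $g(\cdot,u)\equiv 0$ delivers, after expanding, a family of candidate distributions $\gam_{h,l}$ on $\Lam_h$ with the required spectral gap, and a short combinatorial argument picks out a nonzero one. \thmref{thm:gapdens} then gives $\DK(\Lam_h)\ge\del(S)/(k+1)$ uniformly in $h$, and \thmref{thmLO1} finishes. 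This construction gives both the support localization (exactly $\Lam_h$) and the spectral gap (size $\del(S)-2r$) automatically, with none of the coordination problems your shift-subtract scheme runs into. To repair your argument you would essentially have to rediscover this mechanism and reinstate \thmref{thmLO1} as the final step.
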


The result was proved in 
\cite{LO13}, 
\cite{LO15}
in the case
where $\alpha$ 
and $\ft{\al}$
are measures, and it
is generalized here to the context
of   temperate distributions.
 \thmref{thmB1}
 constitutes the   first step in the
proof of \thmref{thmA1},
where the next step 
 consists of showing
that the  distribution $\al$ must be of
the form \eqref{eqA1.1}.

\subsection{}
Recall that
a set $\Lambda\subset \R$ is said to be
\emph{uniformly discrete} if there is
$\delta>0$ such that $|\lambda'-\lambda|\ge \delta$ for any two distinct points
$\lambda,\lambda'$ in $\Lambda$. The maximal constant $\delta$ with this property is
called the \emph{separation constant} of $\Lambda$, and will be denoted by
$\delta(\Lambda)$.

One can check that  if  $\Lambda$
is  a uniformly discrete set then 
$\DK(\Lam) \le 1/\delta(\Lambda)$.
In particular, the
density $\DK(\Lam)$ is finite.

We say that a set
 $\Lam \sbt \R$ is 
 \emph{relatively dense} if there 
is $a>0$ such that any interval $[x,x+a]$
 contains at least one point from $\Lam$.

If $\Lam$ is uniformly discrete and also
relatively dense, then $\Lam$ is called a
\emph{Delone set}.

\subsection{}
Let  $\Lam \sbt \R$, and for each $h\in \Lam-\Lam$ denote
\begin{equation}
\label{eq:lamhdef}
\Lam_h:=\Lambda\cap(\Lambda-h)=\{\lam \in \Lam : \lam+h \in \Lam\}.
\end{equation}
Then  $\Lam_h$ is a nonempty subset of $\Lam$.
We will use the following key result:

\begin{thm}[{\cite{LO13}, \cite{LO15}}]
  \label{thmLO1}
Let $\Lambda\subset \R$ be a Delone set,
and suppose that there exists a constant $c = c(\Lam) >0$
such that $\DK(\Lam_h) \ge c$ for
every $h \in \Lam-\Lam$. Then 
$\Lam$ is contained in a finite union
of translates of a certain lattice.
\end{thm}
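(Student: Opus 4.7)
The plan is to separate the argument into a local-to-global combinatorial step followed by an exploitation of the density hypothesis. I would proceed in two stages: first, use the lower bound on $\DK(\Lam_h)$ to show that $\Lam - \Lam$ is uniformly discrete, so that $\Lam$ becomes a Meyer set; second, use the same density hypothesis more carefully to sharpen the Meyer structure and force $\Lam$ into a finite union of translates of a lattice.

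For the first stage, set $\delta := \delta(\Lam)$. The key elementary observation is that whenever $h, h' \in \Lam - \Lam$ are distinct with $|h - h'| < \delta$, the sets $\Lam_h$ and $\Lam_{h'}$ must be disjoint: any common point $\lam$ would yield $\lam+h,\lam+h' \in \Lam$ at distance strictly less than $\delta$, forcing $h = h'$. Since each $\Lam_h$ has density at least $c$ and $\Lam$ has density at most $1/\delta$, the number of elements of $\Lam - \Lam$ lying in any interval of length $\delta$ is bounded by $N := \lfloor 1/(c\delta)\rfloor$. To upgrade this local boundedness to genuine uniform discreteness, I would iterate on differences of differences: if pairs in $\Lam - \Lam$ come arbitrarily close, then by chasing points of $\Lam$ through chains of translations $\lam \mapsto \lam + h_i$, one can manufacture arbitrarily many distinct small elements of $\Lam - \Lam$ clustered near zero, contradicting the uniform local bound $N$.

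For the second stage, once $\Lam - \Lam$ is uniformly discrete, Meyer's structure theorem furnishes a cut-and-project representation $\Lam \sbt \{p_1(x) : x \in \widetilde L,\ p_2(x) \in W\}$, where $\widetilde L \sbt \R \times H$ is a lattice, $H$ is a locally compact abelian group, $p_2(\widetilde L)$ is dense in $H$, and $W \sbt H$ is a relatively compact window. The density of $\Lam_h$ is then essentially $\mes_H(W \cap (W - p_2(\tilde h)))/\mes_H(W)$, where $\tilde h \in \widetilde L$ is the lift of $h$. The hypothesis $\DK(\Lam_h) \ge c$ forces this overlap to stay bounded below uniformly in $h$, which prevents $p_2(\tilde h)$ from approaching the boundary of $W - W$. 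Combined with the density of $p_2(\widetilde L)$ in $H$, this is only consistent with $H$ being finite, and hence $\Lam$ is contained in finitely many translates of the one-dimensional lattice $p_1(\widetilde L \cap (\R \times \{0\}))$.

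The main obstacle is the first stage, specifically the passage from bounded local complexity of $\Lam - \Lam$ to a genuine positive separation constant. The elementary pigeonhole argument only controls how many points of $\Lam - \Lam$ lie in any $\delta$-interval; it does not by itself preclude arbitrarily close pairs. The delicate iteration on iterated difference sets, invoking the density hypothesis at successive scales, is where the real combinatorial substance of the theorem lies, and it is the step I expect to require the most care.
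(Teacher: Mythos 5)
Note first that the paper does not prove this statement at all: it is quoted from \cite{LO13}, \cite{LO15}, and the text only records that the proof occupies Sections 5--6 of \cite{LO15}, uses the notion of Meyer set together with Meyer's characterization \cite{Mey72}, and that replacing $D_\#$ by $\DK$ is harmless. So your proposal has to stand on its own, and it has a genuine gap exactly where you say the difficulty lies. The pigeonhole step is sound: distinct $h,h'\in\Lam-\Lam$ with $|h-h'|<\delta(\Lam)$ give disjoint $\Lam_h,\Lam_{h'}\subset\Lam$, so superadditivity of $\DK$ together with $\DK(\Lam)\le 1/\delta(\Lam)$ yields $\#\bigl((\Lam-\Lam)\cap I\bigr)\le 1/(c\,\delta(\Lam))$ for every interval $I$ of length $\delta(\Lam)$. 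But this is only finite local complexity, not uniform discreteness, and the proposed remedy of ``iterating on differences of differences'' meets a structural obstruction: both the hypothesis $\DK(\Lam_h)\ge c$ and the pigeonhole bound apply only to elements of $\Lam-\Lam$, whereas $h-h'$ for $h,h'\in\Lam-\Lam$ generally lies only in $(\Lam-\Lam)-(\Lam-\Lam)$. Controlling such second-order differences is essentially the Meyer property you are trying to establish, so as sketched the iteration is circular; a new mechanism is required, and supplying it is exactly the content of \cite[Section 5]{LO15}.

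Stage 2 also has an unflagged gap. Meyer's theorem places $\Lam$ inside a model set $M$, and since $\Lam_h\subset M_h$ the hypothesis does bound the window overlap $\mes_H\bigl(W\cap(W-p_2(\tilde h))\bigr)$ from below whenever $\tilde h$ lifts some $h\in\Lam-\Lam$. But $\Lam-\Lam$ can be a thin subset of $M-M$, and nothing forces the projections $p_2(\tilde h)$ of those particular lifts to approach the boundary of $W-W$; hence you cannot conclude that arbitrarily small overlaps occur, nor that $H$ is finite. Indeed one can have $\Lam$ contained in finitely many translates of a lattice while the ambient model set $M$ has continuous internal space, so the inference ``$H$ finite'' does not follow from what you have written. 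The argument needs to exploit that it is $\Lam$ itself which is Delone and whose difference set carries the density hypothesis, rather than the full $M-M$; this refinement is the non-trivial content of \cite[Section 6]{LO15}.
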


This was actually proved
in \cite{LO13}, \cite{LO15}
with the density
\begin{equation}
\label{eq:dsharp}
D_{\#}(\Lam_h) := \liminf_{r \to +\infty}
\frac{\# (\Lam_h \cap [-r,r])}{2r}
\end{equation}
instead of $\DK(\Lam_h)$ in the
statement, but both the result and its proof
are valid for either one of these densities.
The proof, see \cite[Sections 5, 6]{LO15}, involves the concept of 
\emph{Meyer sets} and is based on
a characterization
of these sets that is due to Meyer  \cite{Mey72}.

\subsection{}
The following result can be found
in \cite[Proposition 7]{KM58}.

\begin{lem}
  \label{lem:suppreldens}
Let  $\alpha$ be
a nonzero temperate distribution whose
spectrum $S = \supp(\ft{\alpha})$ is 
 uniformly discrete.  Then 
the support  $\Lam = \supp(\alpha)$ is a relatively dense set.
\end{lem}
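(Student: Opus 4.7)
The plan is to derive a contradiction from \thmref{thm:gapdens} by localizing $\ft{\alpha}$ so as to produce a nonzero distribution supported on $S$ whose Fourier transform vanishes on an arbitrarily long interval. Suppose, for contradiction, that $\Lam = \supp(\alpha)$ is not relatively dense. Then for every $R>0$ there is a point $y_R \in \R$ with $\Lam \cap (y_R - R/2, y_R + R/2) = \emptyset$.

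First I would apply \lemref{lem:alphcoeffpolygrow} to $\ft{\alpha}$, whose support $S$ is uniformly discrete by hypothesis, to write
\[
\ft{\alpha} = \sum_{p=0}^{k} \sum_{s \in S} b_p(s)\, \del_s^{(p)}, \qquad |b_p(s)| \le C(1+|s|)^n.
\]
Next I would fix $a>0$ and choose a Schwartz function $\psi$ whose Fourier transform $\ft{\psi}$ has compact support contained in $[-a/2, a/2]$ and such that $\psi$ does not vanish at any point of $S$. Such a $\psi$ exists: any nonzero band-limited function is the restriction to $\R$ of an entire function of exponential type, hence has only countably many zeros; translating $\psi$ preserves the support of $\ft{\psi}$, so a suitable translate avoids the countable set $S$.

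Set $\gam := \ft{\alpha} \cdot \psi$ and expand each product $\del_s^{(p)} \cdot \psi$ by Leibniz's rule. One obtains $\gam = \sum_{q=0}^{k} \sum_{s \in S} c_q(s)\, \del_s^{(q)}$, where each $c_q(s)$ is a finite linear combination of terms $b_p(s)\,\psi^{(p-q)}(s)$. Since $\psi$ is Schwartz, $|\psi^{(j)}(s)|$ decays faster than any polynomial, dominating the polynomial growth $|b_p(s)| \le C(1+|s|)^n$; combined with the uniform discreteness of $S$, this yields the absolute summability $\sum_{q,s}|c_q(s)| < \infty$. Thus $\gam$ is of the form \eqref{eq:tlamabsconv} satisfying \eqref{eq:tlocfinsumcoefffin}. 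Since $\psi$ does not vanish on $S = \supp(\ft{\alpha})$, the product property recalled in the preliminaries gives $\supp(\gam) = S$, and in particular $\gam \ne 0$.

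Next, since Fourier transformation exchanges multiplication and convolution, $\ft{\gam}$ equals the convolution of $\ft{\psi}$ with the reflection of $\alpha$, so $\supp(\ft{\gam}) \sbt -\Lam + [-a/2, a/2]$. The assumed gap in $\Lam$ of length $R$ around $y_R$ therefore forces $\ft{\gam}$ to vanish on an interval of length $R - a$. \thmref{thm:gapdens} then yields $\DK(S) \ge (R-a)/(k+1)$; but $S$ is uniformly discrete, so $\DK(S) \le 1/\delta(S) < \infty$, and letting $R \to +\infty$ gives the desired contradiction. The main technical step is the construction of $\psi$: one simultaneously needs its Fourier transform to be band-limited (to localize $\supp(\ft{\gam})$), its values on the countable set $S$ to be nonzero (to ensure $\gam \ne 0$), and Schwartz decay of $\psi$ and its derivatives (to upgrade the polynomial growth of the $b_p$ into the absolute summability demanded by \thmref{thm:gapdens}); the translation trick handles the zero-avoidance while the Schwartz property handles the decay.
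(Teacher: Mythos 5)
Your proposal is correct and takes essentially the same route as the paper's proof: multiply $\ft{\alpha}$ by a band-limited Schwartz function nonvanishing on $S$, use \lemref{lem:alphcoeffpolygrow} to get a nonzero distribution $\gam$ of the form \eqref{eq:tlamabsconv}--\eqref{eq:tlocfinsumcoefffin} with a spectral gap matching the gap in $\Lam$, and invoke \thmref{thm:gapdens} to bound that gap by $(k+1)\DK(S)<\infty$. The only cosmetic differences are that the paper writes the multiplier as $\ft{\varphi}$ with $\varphi$ compactly supported, ensures nontriviality by asking $\ft{\varphi}>0$ rather than by a translation avoiding the zeros on $S$, and states the conclusion directly instead of by contradiction.
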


\begin{proof}
Suppose that $[x, x+a]$ is an interval of length $a$ disjoint
from the support $\Lam$. Let $\varphi$ be a 
Schwartz function supported on a sufficiently
small neighborhood of the
origin so that $\alpha \ast \varphi$ vanishes on $[x,x+a]$.
It follows from \lemref{lem:alphcoeffpolygrow} 
(applied to the Fourier transform $\ft{\al}$
of $\al$) that the distribution 
$\gam := \ft{\alpha} \cdot \ft{\varphi}$
satisfies the conditions
\eqref{eq:tlamabsconv} and 
\eqref{eq:tlocfinsumcoefffin} with $\Gam = S$
and with  $k$ that does not
depend on $\varphi$. 
If we choose $\varphi$ such that
also $\ft{\varphi} > 0$, then 
$\gam$ is a nonzero distribution
whose Fourier transform vanishes
on an interval of length $a$.
\thmref{thm:gapdens}  then yields
that $a$ cannot be
greater than $(k+1)\DK(S)$.
\end{proof}

\subsection{}
We  now 
use the previous results
in order to
prove \thmref{thmB1}.

\begin{proof}[Proof of \thmref{thmB1}]
By \lemref{lem:suppreldens} the support
$\Lam$ is  relatively dense, so
$\Lam$ is a Delone set. Due to
\thmref{thmLO1} it will therefore
suffice to show that there is $c >0$
such that $\DK(\Lam_h) \ge c$ for
every $h \in \Lam-\Lam$. 
We know from 
 \lemref{lem:alphcoeffpolygrow}
that there exist $n$ and $k$ such that the distribution
$\alpha$ has the form
\eqref{eq:tudphiprod}
and that
\eqref{eq:tudsumcoefffin}
is satisfied. We will prove
that the condition
\begin{equation}
\label{eq:dklamh}
\DK(\Lam_h) \geq \del(S)/(k+1)
\end{equation}
holds for  every $h \in \Lam - \Lam$.

To prove
\eqref{eq:dklamh}   we will
show that given any
$h \in \Lam - \Lam$ and any
$r>0$ one can find  
 a nonzero distribution 
\begin{equation}
\label{eq:ahform}
\gam_h  = \sum_{p=0}^{k} \sum_{\lam \in \Lam_h}
c_{p,h}(\lam) \del_{\lam}^{(p)}
\end{equation}
such that the coefficients $c_{p,h}(\lam)$ 
satisfy 
\begin{equation}
\label{eq:ahcoeff}
 \sum_{p=0}^{k} \sum_{\lam \in \Lam_h}
|c_{p,h}(\lam)| < + \infty,
\end{equation}
and  such that the Fourier transform
$\ft{\gam}_h$ vanishes on the set
\begin{equation}
\label{eq:usetdef}
U := \R \setminus [(S-S) + (-r,r)].
\end{equation}
Observe that 
the last property implies that the distribution
$\gam_h$ has a spectral
gap of size $\del(S) - 2r$, and so it follows from
 \thmref{thm:gapdens}  that
$\DK(\Lam_h) \geq (\del(S)-2r)/(k+1)$.
Since $r$ may be chosen arbitrarily small,
this yields 
\eqref{eq:dklamh}.

In order to construct the distribution $\gam_h$ we  
use the approach in \cite[Section 2.1]{LO13}.
We fix 
 $r>0$ and choose a Schwartz function $\varphi > 0$
such that  $\supp(\ft{\varphi}) \sbt (-r/2, r/2)$.
It follows from \eqref{eq:tudsumcoefffin}
that the product $\alpha \cdot \varphi$ 
has the form
\begin{equation}
\label{eq:Trexpr}
\alpha \cdot \varphi = \sum_{p=0}^{k} \sum_{\lam \in \Lam}
b_p(\lam) \del_{\lam}^{(p)}
\end{equation}
where the coefficients $b_p(\lam)$ satisfy
\begin{equation}
\label{eq:sumcoefffin}
 \sum_{p=0}^{k} \sum_{\lam \in \Lam}
|b_p(\lam)| < + \infty.
\end{equation}
(The coefficients $b_p(\lam)$ depend on the function
$\varphi$, but $k$ does not.)

Let $f$ be the Fourier transform of the distribution $\al \cdot \varphi$.
Then we have $f = \ft{\al} \ast \ft{\varphi}$ and hence
$f$ is an infinitely smooth function 
vanishing outside  the $(r/2)$-neighborhood of the
set $S$. On the other hand, by \eqref{eq:Trexpr} we have
\begin{equation}
\label{eq:fexpr}
f(x) = \sum_{p=0}^{k} (2 \pi i x)^p \sum_{\lam \in \Lam}
b_p(\lam) e^{-2 \pi i \lam x}.
\end{equation}
Let $g(x,u) := f(x) \overline{f(x-u)}$, then
using \eqref{eq:fexpr} and  opening the
brackets we obtain
\begin{equation}
\label{eq:gexpr}
g(x,u) = \sum_{l=0}^{2k} (2 \pi i x)^l
\sum_{h \in \Lam-\Lam} A_{h,l}(u) e^{-2 \pi i h x},
\end{equation}
where the coefficients $A_{h,l}(u)$  are given by
\begin{equation}
\label{eq:ahlexpr}
A_{h,l}(u) = \sum_{(p,q,j) \in J(k,l)} (-1)^{p+j} \binom{p}{j}
(2 \pi i u)^j \sum_{\lam \in \Lam_h} 
b_q(\lam+h) \overline{b_p(\lam)} e^{-2 \pi i \lam u}
\end{equation}
and where $J(k,l)$ denotes the finite set
of all triples $(p,q,j) \in \Z^3$  satisfying
\begin{equation}
\label{eq:nmjcond}
 0 \leq p,q \leq k, \quad 0 \leq j \leq p, \quad p+q-j = l.
\end{equation}

We observe that $g(x,u)$ is,
as a function of $x$, 
 the Fourier transform of the distribution
\begin{equation}
\label{eq:betaexpr}
\beta_u = \sum_{l=0}^{2k} \sum_{h \in \Lam-\Lam}
A_{h,l}(u)  \del_{h}^{(l)},
\end{equation}
and the coefficients $A_{h,l}(u)$ satisfy
\begin{equation}
\label{eq:betasumcoefffin}
 \sum_{l=0}^{2k} \sum_{h \in \Lam-\Lam}
|A_{h,l}(u)| < + \infty.
\end{equation}
On the other hand, 
for every $u \in U$ the function 
$g(x,u)$ vanishes identically with respect to $x$.
It  follows from the uniqueness of the
representation
\eqref{eq:betaexpr}
that $A_{h,l}(u)=0$, $u \in U$.

For each $h \in \Lam-\Lam$ and $0 \le l \le 2k$ we now define 
\begin{equation}
\label{eq:gamhldef}
\gam_{h,l} := 
\sum_{(p,q,j) \in J(k,l)} (-1)^{p+j} \binom{p}{j}
\sum_{\lam \in \Lam_h}   
b_q(\lam+h) \overline{b_p(\lam)}  \, \del_\lam^{(j)}.
\end{equation}
The fact that
\begin{equation}
\label{eq:gamhlcoefff}
\sum_{\lam \in \Lam_h}
| b_q(\lam+h) \overline{b_p(\lam)} |  < + \infty,
\quad (p,q,j) \in J(k,l),
\end{equation}
ensures that the sum \eqref{eq:gamhldef}
converges in the space of temperate distributions.
Moreover, the distribution $\gam_{h,l}$ has the form
\eqref{eq:ahform}, \eqref{eq:ahcoeff},
since the condition $(p,q,j) \in J(k,l)$
implies that $j \le k$. Moreover, 
due to \eqref{eq:ahlexpr} we have
$\ft{\gam}_{h,l} (u) = A_{h,l}(u)$, $u \in \R$,
which yields
\begin{equation}
\label{eq:gamhlfouriervan}
\ft{\gam}_{h,l} (u) = 0, \quad u \in U.
\end{equation}

It thus remains to show that for each $h \in \Lam-\Lam$ 
there is at least one  $l_0 = l_0(h)$ such that 
the distribution
$\gam_{h,l_0}$ is nonzero.
Indeed, given such $h$ we choose
$\lam_0 \in \Lam$ such that $\lam_0+h$ is also
in $\Lam$. Since $\varphi > 0$
we have 
$\supp(\al \cdot \varphi) = \Lam$,  hence
by \eqref{eq:Trexpr} 
there is a largest integer $j_0$
such that the coefficient
$b_{j_0}(\lam_0)$ is nonzero,
 and there is also at least one
integer $l_0$ such that
$b_{l_0}(\lam_0+h)$ is nonzero
(in particular we have $0 \le l_0 \le k$).
Let us show that for this choice of $l_0$
the distribution $\gam_{h,l_0}$ is nonzero. 
It would suffice to
verify that there is
precisely one triple $(p,q,j) \in J(k,l_0)$
with $j=j_0$ and such that
$b_q(\lam_0+h) \overline{b_p(\lam_0)}$ 
is nonzero, since this would imply that
in the sum
\eqref{eq:gamhldef} 
the coefficient 
of $\del_{\lam_0}^{(j_0)}$
is nonzero.  
Indeed, if $(p,q,j)$  is such 
a  triple  then  we have
$p \leq j_0$ due to  the maximality of $j_0$.
But at the same time
$p \geq j = j_0$,  so 
$p = j_0$. Since $p+q-j=l_0$, this implies in turn
that $q = l_0$.
It follows that $(j_0,l_0,j_0)$ is the unique triple in $J(k,l_0)$
with the properties above, and the coefficient 
$b_{l_0}(\lam_0+h) \overline{b_{j_0}(\lam_0)}$ is indeed nonzero.
We thus obtain that the distribution
$\gam_{h,l_0}$ is nonzero which concludes the proof.
\end{proof}


\section{Poisson type structure of the distribution}
\label{sec:poisson}

 In this section we complete the proof of
\thmref{thmA1} by establishing the following:

\begin{thm}
  \label{thmB2}
  Let $\alpha$ be a temperate distribution on $\R$.
Suppose that $\Lam = \supp(\alpha)$ is contained
in a finite union
of translates of a lattice $L$,
and that $S = \supp(\ft{\alpha})$
is a locally finite set.
Then $\alpha$ can be
represented in  the form
\eqref{eqA1.1}.
\end{thm}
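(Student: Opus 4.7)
By \lemref{lem:alphcoeffpolygrow}, $\al = \sum_{p=0}^k \sum_{\lam} a_p(\lam)\, \del_\lam^{(p)}$ with polynomial growth of coefficients. Writing $\Lam \sbt \bigcup_{j=1}^N (L+\tau_j)$ with the $\tau_j$ distinct modulo $L$ and decomposing each $\lam = \mu_\lam + \tau_{j(\lam)}$ with $\mu_\lam \in L$, the Fourier transform takes the form
\[ \ft\al(t) = \sum_{p=0}^k \sum_{j=1}^N (2\pi i t)^p\, e^{-2\pi i \tau_j t}\, H_{p,j}(t), \]
where $H_{p,j}(t) := \sum_{\mu \in L} a_p(\mu + \tau_j) e^{-2\pi i \mu t}$ is an $L^*$-periodic distribution. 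The representation \eqref{eqA1.1} will follow once each $H_{p,j}$ is shown to have locally finite support, since $L^*$-periodicity then forces it to be a finite combination of delta-derivative combs on $L^*$.

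\textbf{Multi-Vandermonde extraction (the main step).} Choose $\theta \in L^*$ so that $\zeta_j := e^{-2\pi i \tau_j \theta}$ are distinct for $j=1,\ldots,N$; such $\theta$ exists because the $\tau_j$ are distinct mod $L$. For $m = 0, 1, \ldots, M-1$ with $M = (k+1)N$, using $L^*$-periodicity of $H_{p,j}$,
\[ \ft\al(t+m\theta) = \sum_{p,j} (2\pi i(t+m\theta))^p\, \zeta_j^m\, \tilde H_{p,j}(t), \qquad \tilde H_{p,j}(t) := e^{-2\pi i \tau_j t} H_{p,j}(t). \]
This is an $M \times M$ linear system for the unknowns $\tilde H_{p,j}(t)$, with coefficient matrix $A(t)$ of polynomial entries. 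I claim $\det A(t)$ is a nonzero constant. At any fixed $t_0$, a nontrivial column dependence would yield polynomials $P_j$ of degree $\le k$, not all zero, with $\sum_j \zeta_j^m P_j(2\pi i(t_0+m\theta)) = 0$ for $m=0,\ldots,M-1$. Setting $Q_j(m) := P_j(2\pi i(t_0+m\theta))$, a polynomial of degree $\le k$ in $m$, this becomes $\sum_j \zeta_j^m Q_j(m) = 0$, contradicting the classical linear independence of the exponential polynomials $\{m^l \zeta_j^m : l=0,\ldots,k,\ j=1,\ldots,N\}$ for distinct $\zeta_j$. Thus $\det A(t)$ is a polynomial in $t$ with no zeros in $\C$, hence a nonzero constant; $A(t)^{-1}$ has polynomial entries, so each $\tilde H_{p,j}(t)$ is a polynomial combination of the translates $\ft\al(t+m\theta)$. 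Consequently $\supp(H_{p,j}) = \supp(\tilde H_{p,j}) \sbt \bigcup_m (S-m\theta)$, which is locally finite.

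\textbf{Fourier inversion.} Each $H_{p,j}$, being $L^*$-periodic with locally finite support, has the form $\sum c_{p,j,l,q} \sum_{s \in L^*}\del_{\om_{p,j,l}+s}^{(q)}$ over a finite index set. The inverse Fourier transform of the building block $(2\pi i t)^p e^{-2\pi i \tau_j t}\sum_{s \in L^*}\del_{\om+s}^{(q)}$ is, up to a multiplicative constant, $\sum_{\mu \in L}\mu^q e^{2\pi i \mu \om}\del_{\mu+\tau_j}^{(p)}$, as follows from Poisson's summation formula together with the standard correspondences (translation $\leftrightarrow$ modulation, differentiation $\leftrightarrow$ multiplication by $2\pi i t$, multiplication by $x$ $\leftrightarrow$ differentiation on the Fourier side). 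Summing over the finite index set yields the representation \eqref{eqA1.1}. The main obstacle throughout is the Vandermonde step: in the measure case ($k=0$) treated in \cite{LO13}, \cite{LO15} the character-periodicity $\ft{\al_j}(t+s) = e^{-2\pi i \tau_j s}\ft{\al_j}(t)$ cleanly separates the cosets, but for $k \ge 1$ this fails because the polynomial factor $(2\pi i t)^p$ does not commute with shifts, forcing the combined $(k+1)N$-dimensional system above whose invertibility analysis is the core computation.
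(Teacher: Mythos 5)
Your proof is correct and follows essentially the same route as the paper: decompose $\alpha$ along the $N$ cosets of $L$ (\lemref{lem:alphcoeffpolygrow} gives the order bound $k$), take $(k+1)N$ shifts of $\ft\al$ by elements of $L^*$, invert the resulting linear system to express the $L^*$-periodic pieces as polynomial combinations of translates of $\ft\al$, deduce locally finite (hence finitely many cosets of $L^*$) support for each periodic piece, and Fourier-invert. The one organizational difference is worth noting. The paper factors the $(k+1)N\times (k+1)N$ matrix explicitly into a constant confluent-Vandermonde block (Lemma~\ref{lem:confluent}, unknowns $\beta_{j,l}$) followed by a unitriangular matrix with polynomial entries (Lemma~\ref{lem:diagpolym}), whereas you treat the single $t$-dependent matrix $A(t)$ with entries $(2\pi i(t+m\theta))^p\zeta_j^m$ and show $\det A(t)$ is a nonzero constant by ruling out roots. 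That is a cleaner packaging of the same computation. Do be a bit more precise at one point: when you derive $\sum_j\zeta_j^mQ_j(m)=0$ for $m=0,\dots,M-1$ with $\deg Q_j\le k$ and say this ``contradicts the classical linear independence of $\{m^l\zeta_j^m\}$,'' you need linear independence of those $M$ functions restricted to the finite sample set $\{0,\dots,M-1\}$, not just as functions on $\Z$. That restricted independence is exactly the invertibility of the confluent Vandermonde matrix (the paper's \lemref{lem:confluent}), which can be justified, e.g., by noting that these exponential polynomials span the solution space of a linear recurrence of order $M$ with characteristic polynomial $\prod_j(z-\zeta_j)^{k+1}$, so $M$ consecutive zeros force the whole sequence to vanish. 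With that sentence tightened, the argument is complete, and the closing Fourier-inversion step matches the paper's \eqref{eq:aljftsimple}--\eqref{eq:alphfinal}.
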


\thmref{thmA1} follows as a consequence
of \thmref{thmB1} and \thmref{thmB2}.

The representation
\eqref{eqA1.1}
implies 
that the  distribution $\al$ 
can be obtained 
from the Poisson  summation formula by a finite 
number of basic operations:
 shifts, modulations, differentiations, 
multiplication by polynomials, and taking linear combinations.

\subsection{}
We  will use  the following two lemmas.

\begin{lem}
  \label{lem:confluent}
Let $z_1, \dots, z_N$ be distinct nonzero complex numbers,
let $k_1, \dots, k_N$ be positive integers,
and let $K = k_1 + \dots + k_N$.
For each integer
$m$, $0 \le m \le K-1$, 
and for each pair of integers
$(j,l)$ satisfying $1 \le j \le N$,
$0 \le l \le k_j - 1$, we denote
\begin{equation}
\label{eq:confdef}
M_{m,j,l} := (2 \pi i m)^l z_j^m.
\end{equation}
Let $M$ be a $K \times K$ matrix
whose
rows are indexed by $m$
and columns indexed by pairs
$(j,l)$, such that the 
entry at row $m$ and column
$(j,l)$ is $M_{m,j,l}$.
 Then  $M$ is invertible.
\end{lem}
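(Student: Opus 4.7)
The plan is to prove that $M$ has trivial kernel by translating the statement into a question about exponential-polynomial sequences. A null vector $(c_{j,l})$ of $M$ yields complex coefficients such that the sequence
\begin{equation*}
F(m) := \sum_{j=1}^{N} Q_j(m)\, z_j^m, \qquad Q_j(x) := \sum_{l=0}^{k_j-1} c_{j,l}\, (2\pi i x)^l,
\end{equation*}
vanishes at $m = 0, 1, \dots, K-1$. Each $Q_j$ is a polynomial of degree less than $k_j$, and the goal is to conclude that every $Q_j$ is the zero polynomial, which forces $c_{j,l}=0$ for all $(j,l)$.

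First I would observe that $F$ satisfies the linear recurrence whose characteristic polynomial is
\begin{equation*}
\Pi(x) := \prod_{j=1}^{N} (x - z_j)^{k_j},
\end{equation*}
a polynomial of degree $K$. The reason is standard: for each fixed $j$ and each $0 \le l < k_j$, the sequence $m^l z_j^m$ satisfies this recurrence because $z_j$ is a root of $\Pi$ of multiplicity $k_j$; by linearity, $Q_j(m) z_j^m$, and hence $F(m)$, satisfies it as well. Since the recurrence has order $K$ and $F$ vanishes at $K$ consecutive initial values, a forward induction immediately shows $F(m) = 0$ for every $m \ge 0$.

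Next I would deduce that each polynomial $Q_j$ must vanish identically from the fact that $\sum_{j=1}^{N} Q_j(m) z_j^m = 0$ on $\Z_{\ge 0}$. Passing to generating functions, each term $\sum_{m \ge 0} Q_j(m) (z_j x)^m$ is a rational function of the form $R_j(x)/(1 - z_j x)^{k_j}$ for some polynomial $R_j$ with $\deg R_j < k_j$, and the sum of these rational functions is identically zero. Since the $z_j$ are distinct and nonzero, the denominators $(1 - z_j x)^{k_j}$ are pairwise coprime, so uniqueness of the partial fraction decomposition forces every $R_j$ to vanish; tracing back, this forces every $Q_j$ to be identically zero and completes the argument.

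The whole proof is essentially bookkeeping once the set-up is in place, so there is no serious conceptual obstacle: the main point is to recognise that the two levels in the indexing of the columns of $M$---the outer index $j$ selecting the base $z_j$ and the inner index $l$ selecting the polynomial factor---correspond precisely to the two sources of multiplicity in the $K$-dimensional space of solutions of the recurrence associated with $\Pi$. An alternative would be to compute $\det M$ directly as a confluent Vandermonde determinant, but the recurrence approach sidesteps the combinatorial bookkeeping involved in that computation.
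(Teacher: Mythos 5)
Your proof is correct, and it takes a genuinely different route from the one sketched in the paper. The paper does not argue directly; it observes that after elementary column operations $M$ becomes a confluent (generalized) Vandermonde matrix, and it simply cites the known fact that such matrices are invertible, interpreting the nonvanishing of the determinant through the Hermite interpolation problem of prescribing $p(z_j), p'(z_j), \dots, p^{(k_j-1)}(z_j)$ for a polynomial $p$ of degree at most $K-1$. You instead give a self-contained argument that the kernel of $M$ is trivial: a null vector produces an exponential-polynomial sequence $F(m)=\sum_j Q_j(m)z_j^m$ vanishing at $K$ consecutive integers; $F$ satisfies the order-$K$ recurrence with characteristic polynomial $\prod_j(x-z_j)^{k_j}$, so it vanishes identically; and passing to generating functions, the uniqueness of the partial fraction decomposition of $\sum_j R_j(x)/(1-z_jx)^{k_j}=0$ with pairwise coprime denominators forces each $R_j$, and hence each $Q_j$, to vanish. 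The two approaches are of course dual in spirit (uniqueness of Hermite interpolation versus linear independence of the exponential-polynomial solutions of the recurrence), but yours avoids invoking the confluent Vandermonde determinant as a black box, at the cost of setting up the recurrence and generating-function bookkeeping. Both are standard and fully rigorous; yours is the more elementary and self-contained of the two.
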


This is due to the fact that the matrix
$M$ can be obtained by a finite number of
elementary column operations
from a \emph{confluent Vandermonde matrix}
(sometimes also referred to as a \emph{generalized} Vandermonde
matrix) which is known to be invertible,
see, for instance, \cite{Kal84}.
This type of matrix arises e.g.\ in the linear interpolation
problem asking to find a polynomial $p(z)$
of degree not greater than $K-1$ such that at each node
$z_j$ the values
$p(z_j), p'(z_j), \dots,
p^{(k_j-1)}(z_j)$ are prescribed.

\begin{lem}
  \label{lem:diagpolym}
Let $M(t)$ be a $(k+1) \times (k+1)$
matrix with entries
$M_{p,l}(t)$, $0 \le p,l \le k$,
defined by
\begin{equation}
\label{eq:diagup}
M_{p,l}(t) = \binom{p}{l} (2 \pi i t )^{p-l},
\quad  l \le p,
\end{equation}
and $M_{p,l}(t) = 0$, $p<l$.
Then the matrix $M(t)$ is invertible,
and the entries of the
inverse matrix $M^{-1}(t)$ 
are polynomials in $t$.
\end{lem}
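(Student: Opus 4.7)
The plan is to exploit the lower-triangular structure of $M(t)$. First I would observe that $M_{p,l}(t)=0$ for $p<l$, so $M(t)$ is lower triangular, and that $M_{p,p}(t)=\binom{p}{p}(2\pi i t)^{0}=1$, so every diagonal entry equals $1$. Consequently $\det M(t)=1$ for every $t$, which already gives invertibility. The polynomial-in-$t$ character of $M^{-1}(t)$ follows abstractly from Cramer's rule (cofactors are polynomials in $t$, divided by the constant determinant $1$), but I would prefer to pin down the inverse explicitly so that the polynomial assertion becomes transparent.

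The natural guess is $M^{-1}(t)=M(-t)$. The conceptual reason is that $M(t)$ is the matrix of the translation operator $f(x)\mapsto f(x+t)$ acting on $\operatorname{span}\{(2\pi i x)^{p}:0\le p\le k\}$: indeed
\begin{equation*}
(2\pi i(x+t))^{p}=\sum_{l=0}^{p}\binom{p}{l}(2\pi i t)^{p-l}(2\pi i x)^{l},
\end{equation*}
so the coefficient $M_{p,l}(t)$ is precisely the coordinate of the translate of the $p$-th basis vector along the $l$-th basis vector. Since translations form a one-parameter group, $M(t)M(-t)=M(0)=I$ automatically.

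To verify $M(t)M(-t)=I$ directly I would compute
\begin{equation*}
(M(t)M(-t))_{p,l}=\sum_{m=l}^{p}\binom{p}{m}(2\pi i t)^{p-m}\binom{m}{l}(-2\pi i t)^{m-l},
\end{equation*}
apply the absorption identity $\binom{p}{m}\binom{m}{l}=\binom{p}{l}\binom{p-l}{m-l}$, change the summation variable to $j=m-l$, and recognize the resulting sum as $\binom{p}{l}(2\pi i t-2\pi i t)^{p-l}$ by the binomial theorem. This equals $\delta_{p,l}$, so $M^{-1}(t)=M(-t)$, whose entries are manifestly polynomials in $t$ of degree at most $k$.

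There is no serious obstacle: the only step requiring care is the binomial bookkeeping, which is entirely routine. An alternative, if one wanted to avoid guessing the inverse, would be to solve the triangular system $M(t)x=e_{l}$ by forward substitution column by column; the induction produces each column of $M^{-1}(t)$ as a polynomial in $t$. Either route works, but recording the group identity $M(t)M(-t)=I$ seems the cleanest presentation.
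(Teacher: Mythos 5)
Your opening observation (lower triangular, diagonal entries all equal to $1$, hence determinant $1$, hence invertible with polynomial inverse via Cramer's rule or forward substitution) is exactly the paper's one-sentence argument. The additional step of identifying the inverse explicitly as $M^{-1}(t)=M(-t)$ via the translation-group interpretation and the Vandermonde convolution check is correct and a pleasant sharpening, but it is not needed for the lemma as stated.
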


This lemma is obvious since
$M(t)$ is a triangular matrix whose nonzero
entries are 
 polynomials in $t$, and
whose entries on the main diagonal are all equal to $1$.

\subsection{}
Next we prove 
 \thmref{thmB2}.
The approach is inspired by
 \cite[Section 7]{LO15}.

\begin{proof}[Proof of  \thmref{thmB2}]
By assumption we have
$\Lam \sbt L + \{ \tau_1, \dots, \tau_N\}$,
where $L \sbt \R$  is a lattice and the
$\tau_j$ are real numbers.
We may suppose that the 
$\tau_j$  are distinct modulo the
lattice $L$. Moreover,
by rescaling it would suffice to consider the
case  $L=\Z$.

By \lemref{lem:alphcoeffpolygrow} 
the distribution 
$\alpha$ can be represented in the form
\begin{equation}
\label{eq:alphdecom}
\alpha(x) = \sum_{p=0}^{k} 
\sum_{j=1}^{N} \mu_{j,p}^{(p)}(x - \tau_j)
\end{equation}
where each $\mu_{j,p}$ is a measure
supported on $\Z$,
\begin{equation}
\label{eq:aljpmes}
\mu_{j,p} = \sum_{\lam \in \Z}
a_{j,p}(\lam) \del_{\lam},
\end{equation}
and where the masses $a_{j,p}(\lam)$ satisfy
\begin{equation}
\label{eq:aljpmescoeff}
 \sum_{p=0}^{k}  \sum_{j=1}^{N}
|a_{j,p}(\lam)|  \leq C(1 + |\lam|)^n,
\quad \lam \in \Z,
\end{equation}
for certain constants $n$ and $C$.

It follows from 
\eqref{eq:aljpmescoeff}
that  each  measure
$\mu_{j,p}$ is a
 temperate distribution. We have
\begin{equation}
\label{eq:aldecomft}
\ft{\al}(t) = \sum_{p=0}^{k}  (2 \pi i t)^p  
\sum_{j=1}^{N} \ft{\mu}_{j,p}(t) e^{-2 \pi i t \tau_j}
\end{equation}
according to  \eqref{eq:alphdecom}.
Since $\mu_{j,p}$ is a  
measure supported on $\Z$,  
its Fourier transform $\ft{\mu}_{j,p}$ is a
$\Z$-periodic 
 distribution. This implies that 
for every $m \in \Z$
we have
\begin{equation}
\label{eq:alfttrnsint}
\ft{\al}(t+m) = 
\sum_{l=0}^{k} (2 \pi i m )^l  
\sum_{j=1}^{N}
\beta_{j,l}(t)
e^{-2 \pi i m \tau_j},
\end{equation}
where 
$\beta_{j,l}$  denotes
the temperate distribution
defined by
\begin{equation}
\label{eq:betdef}
\beta_{j,l}(t) := e^{-2 \pi i t \tau_j}
\sum_{p=l}^{k} \binom{p}{l} (2 \pi i t )^{p-l}
 \,  \ft{\mu}_{j,p}(t) 
\end{equation}
for $1 \le j \le N$ and $0 \le l \le k$.

If we now apply
\eqref{eq:alfttrnsint}
with
$m=0,1,2,\dots,(k+1)N-1$
then we
obtain
a system of $(k+1)N$ 
equations. We consider
this as a linear system  with unknowns
$\beta_{j,l}$. 
We then invoke \lemref{lem:confluent}
with $z_j := e^{-2 \pi i  \tau_j}$
(which are distinct nonzero complex numbers)
and with $k_j := k+1$ $(1 \le j \le N)$.
It follows from the
lemma that the system 
is invertible, and
hence there exist coefficients $b_{j,l,m}$ such that
\begin{equation}
\label{eq:betsol}
\beta_{j,l}(t) = 
\sum_{m=0}^{(k+1)N-1} b_{j,l,m} \, 
\ft{\al}(t+m).
\end{equation}
In other words, each 
$\beta_{j,l}$ is a finite linear combination
of integer translates of $\ft{\al}$.

Next we  apply
\eqref{eq:betdef}
with $l=0,1,\dots,k$ and consider
the obtained $k+1$ 
equations as 
 a linear system  
with unknowns 
$\ft{\mu}_{j,p}(t) \,
e^{-2 \pi i t \tau_j}$
$(0 \le p \le k)$.
By  \lemref{lem:diagpolym},
this system 
is invertible and the coefficients
of the inverse system 
are polynomials in $t$.
We conclude that 
there exist polynomials
$\chi_{p,l}(t)$ such that
\begin{equation}
\label{eq:mujpsol}
\ft{\mu}_{j,p}(t) =
e^{2 \pi i t \tau_j}
 \sum_{l=0}^{k} \chi_{p,l} (t) \,
\beta_{j,l}(t).
\end{equation}

Now  we use the assumption that
$S = \supp(\ft{\alpha})$
is a locally finite set. This
together with 
\eqref{eq:betsol} and
\eqref{eq:mujpsol}
 implies that  the distribution
$\ft{\mu}_{j,p}$ is supported on the
locally finite set
$S - \{0,1,2,\dots,(k+1)N-1\}$.
At the same time,  we know that
$\ft{\mu}_{j,p}$ is a
$\Z$-periodic distribution.
Hence 
$\ft{\mu}_{j,p}$
must have the form
\begin{equation}
\label{eq:aljftsimple}
\ft{\mu}_{j,p} = \nu_{j,p} \ast \sum_{\lam \in \Z} \del_\lam
\end{equation}
where $\nu_{j,p}$ is a 
distribution with finite support.
In turn this implies that
\begin{equation}
\label{eq:aljsimple}
\mu_{j,p}  =   \sum_{\lam \in \Z} \ft{\nu}_{j,p}(-\lam) \del_\lam,
\end{equation}
that is, the masses
$a_{j,p}(\lam)$
in \eqref{eq:aljpmes}
are given by
\begin{equation}
\label{eq:ajpvuft}
a_{j,p}(\lam)= \ft{\nu}_{j,p}(-\lam), \quad
\lam \in \Z.
\end{equation}

Finally we combine
\eqref{eq:alphdecom}  and
\eqref{eq:aljsimple} 
to conclude   that
\begin{equation}
\label{eq:alphfinal}
\alpha = 
\sum_{p=0}^{k} 
\sum_{j=1}^{N} 
   \sum_{\lam \in \Z} \ft{\nu}_{j,p}(-\lam) \del^{(p)}_{\lam+\tau_j}.
\end{equation}
We also observe that  each one of
the functions $\ft{\nu}_{j,p}(x)$ is a finite linear
combination of products of polynomials and
complex exponentials. Hence 
\eqref{eqA1.1} follows from
\eqref{eq:alphfinal}.
\end{proof}


\section{Remarks}

\subsection{}
We say that a set
$\Lam \sbt \R$ has \emph{bounded density} if it
satisfies the condition
\begin{equation}
\label{eq:bddsdef}
\sup_{x \in \R} \#(\Lam \cap [x, x +1)) < +\infty.
\end{equation}
This  holds if and only if
$\Lam$ is the union of a finite number of uniformly discrete sets. 

One can prove the following  version 
of \thmref{thmA1}, where 
the support $\Lam$ is not assumed
to be 
uniformly discrete but only to have
bounded density:

\begin{thm}
  \label{thmC1}
  Let $\alpha$ be a temperate distribution on $\R$
satisfying \eqref{eq:tudphiprod} and
\eqref{eq:tudsumcoefffin}, such that
the support
$\Lam = \supp(\alpha)$ has
bounded density, while the
spectrum 
$S = \supp(\ft{\alpha})$
is  uniformly discrete. Then the
conclusion of \thmref{thmA1} holds.
\end{thm}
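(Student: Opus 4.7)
The overall strategy is to imitate the proof of \thmref{thmA1}: first show that $\Lam$ is contained in a finite union of translates of a lattice, then invoke \thmref{thmB2}. Notice that \thmref{thmB2} requires no uniform discreteness of $\Lam$, only that $\Lam$ lies in a finite union of lattice translates and that $S$ is locally finite, so it applies verbatim once the lattice structure has been established. The entire novelty is therefore in generalizing \thmref{thmB1} to the bounded-density setting.

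Both preparatory results leading up to the application of \thmref{thmLO1} in the proof of \thmref{thmB1} extend without change. The proof of \lemref{lem:suppreldens} uses only the form \eqref{eq:tlamabsconv}--\eqref{eq:tlocfinsumcoefffin} for $\ft{\al}$, which is produced by \lemref{lem:alphcoeffpolygrow} applied to $\ft{\al}$ as soon as $S$ is uniformly discrete, and makes no use of any hypothesis on $\Lam$; so $\Lam$ is relatively dense. The construction of the auxiliary distribution $\gam_h$ in the proof of \thmref{thmB1} uses only the form \eqref{eq:tudphiprod}--\eqref{eq:tudsumcoefffin} for $\al$, which is now part of the hypothesis of \thmref{thmC1}, and the ensuing manipulations through $f = \ft{\al} \ast \ft\varphi$ and $g(x,u) = f(x) \overline{f(x-u)}$ never invoke uniform discreteness of $\Lam$. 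Combined with \thmref{thm:gapdens}, this gives $\DK(\Lam_h) \ge \del(S)/(k+1)$ for every $h \in \Lam-\Lam$.

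The main obstacle is the final step: \thmref{thmLO1} is stated for Delone sets, whereas here $\Lam$ is merely relatively dense and of bounded density. My plan is to deduce that under these combined hypotheses $\Lam$ must already be uniformly discrete, thereby reducing to \thmref{thmB1} itself. Were $\Lam$ not uniformly discrete, one could choose $h_n \in \Lam - \Lam$ with $h_n \to 0$, and for each such $h_n$ the set $\Lam_{h_n}$ has density at least $c = \del(S)/(k+1)$. The distribution $\gam_{h_n}$ constructed in the proof of \thmref{thmB1} has bounded-density support $\Lam_{h_n}$ and a spectral gap, so the same machinery applied to $\gam_{h_n}$ in place of $\al$ should produce density lower bounds for the further fibres of $\Lam_{h_n}$; iterating this observation one should be able to extract arithmetic progressions $\{\lam, \lam + h_n, \lam + 2 h_n, \ldots, \lam + N h_n\} \sbt \Lam$ of length $N$ growing as $h_n \to 0$, contradicting the bounded density of $\Lam$.

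Once $\Lam$ is shown to be uniformly discrete it is Delone, \thmref{thmB1} applies, and $\Lam$ is contained in a finite union of translates of a lattice; \thmref{thmB2} then produces the representation \eqref{eqA1.1}. I expect the most delicate part of this plan to be making the iteration above quantitative: controlling how the length of the extracted arithmetic progressions grows as $h_n \to 0$, relative to the bounded-density constant of $\Lam$, will require careful bookkeeping of the constants in \thmref{thm:gapdens} when it is reapplied to the auxiliary distributions $\gam_{h_n}$ and to their own fibre distributions.
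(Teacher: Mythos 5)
Your plan follows the paper up to the point of applying \thmref{thmLO1}, and that agreement is genuine: the relative-density argument (\lemref{lem:suppreldens}) and the construction of the auxiliary distributions $\gam_h$ with the lower bound $\DK(\Lam_h)\ge\del(S)/(k+1)$ indeed extend verbatim, since they use only the representation \eqref{eq:tudphiprod}--\eqref{eq:tudsumcoefffin} for $\al$ and uniform discreteness of $S$, never uniform discreteness of $\Lam$. Likewise, \thmref{thmB2} applies without change once the lattice structure is in hand. So the only missing piece is a substitute for \thmref{thmLO1}, and this is exactly where your argument and the paper's diverge.

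Where you go wrong is in trying to prove, as an intermediate step, that $\Lam$ must already be uniformly discrete, so that \thmref{thmB1} can be invoked directly. The iteration you sketch has a concrete gap: positive density of $\Lam_{h_n}$ does not produce arithmetic progressions of step $h_n$. To get $\{\lam,\lam+h_n,\lam+2h_n\}\sbt\Lam$ you would need $h_n\in\Lam_{h_n}-\Lam_{h_n}$, i.e.\ some $\lam$ with both $\lam$ and $\lam+h_n$ in $\Lam_{h_n}$, but a lower bound on $\DK(\Lam_{h_n})$ gives no control over whether the small difference $h_n$ recurs inside $\Lam_{h_n}$ itself. Moreover, to run the same spectral-gap machinery on $\gam_{h_n}$ you would need to know that its spectrum is uniformly discrete, which is not established. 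You flag this as ``the most delicate part,'' but it is not merely a matter of bookkeeping constants; the mechanism for extracting the progressions is not there. (Once the theorem is proved, $\Lam$ is of course uniformly discrete, being contained in a finite union of lattice translates, but that is the conclusion and cannot be assumed.)

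The paper's route avoids this entirely: it invokes the fact, proved in \cite[Lemma 6.3]{LO17}, that \thmref{thmLO1} remains valid under the weaker hypothesis that $\Lam$ is a relatively dense set of bounded density rather than a Delone set. With that strengthened version in place, the rest of the argument you outline (relative density of $\Lam$, the fibre density bound, then \thmref{thmB2}) goes through exactly as in \thmref{thmA1}. So the correct fix is not to upgrade $\Lam$ to uniformly discrete, but to downgrade the hypothesis of the Meyer-set criterion.
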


This is an extension of 
  \cite[Theorem 2.2]{LO17} where
the result was proved in the case
where $\alpha$ 
and its Fourier transform $\ft{\al}$
are measures.
The proof is based on the fact that
\thmref{thmLO1} remains valid under the
weaker assumption that $\Lam$ is a 
relatively dense set of bounded density
(not assumed to be uniformly discrete),
see \cite[Lemma 6.3]{LO17}.

\subsection{}
There is an interesting question
as to whether the result in
\thmref{thmA1} holds 
 in several dimensions.
The problem is open even 
for measures:

\emph{Let $\Lam, S$ be two
uniformly discrete sets in $\R^d$, $d>1$.
Suppose that there is a  measure $\mu$,
$\supp(\mu) = \Lam$, whose
distributional Fourier transform
$\ft{\mu}$ is also a measure,
$\supp(\ft{\mu}) = S$. 
Does it follow that $\Lambda$ can be covered
by a finite union of translates of several
lattices?}

It was proved in \cite{LO15} that the answer
is affirmative if
$\mu$ is a \emph{positive} measure,
and in this case the support  $\Lam$ can in fact be covered
by a finite union of translates of a \emph{single}
lattice. However
an example in \cite[Section 2]{Fav16} shows that for signed
measures $\mu$ the support need not be
contained in a finite union of translates of a single lattice.


\end{document}